\newtheorem{theorem}{Theorem}
\newtheorem{lemma}[theorem]{Lemma}
\newenvironment{proof}[1][Proof]{\noindent\textbf{#1.} }{\ \rule{0.5em}{0.5em}}
\begin{document}

\title{Thirty-six Officers and their Code}
\author{Harold N. Ward\\
Department of Mathematics\\
University of Virginia\\
Charlottesville, VA 22904\\
USA}
\maketitle

\begin{abstract}
This note presents a short proof of Euler's 36 officer conjecture. This
implies that there is no affine plane of order $6$, but we also give a
direct proof.
\end{abstract}

\section*{Introduction}

Leonhard Euler published his famous 36 officers problem in 1782 (a
translation of the paper is listed under \cite{E}): \textquotedblleft This
question concerns a group of thirty-six officers of six different ranks,
taken from six different regiments, and arranged in a square in a way such
that in each row and column there are six officers, each of a different rank
and regiment.\textquotedblright\ Euler thought there was no such arrangement
and conjectured that analogous ones for rank-regiment counts that leave a
remainder of 2 when divided by 4 were all impossible. His conjecture for the
original six was proved correct by G. Tarry in 1900 \cite{T}. But in 1959,
E. T. Parker \cite{P} showed that the problem is solvable for an infinite
subset of those counts, including 10. Then in 1960, Parker, R. C. Bose, and
S. S. Shrikhande \cite{BPS} proved the problem solvable for \emph{all }%
rank-regiment counts other than two and six.

There is a combinatorial proof of the 36 officer impossibility by D. R.
Stinson \cite{S} and a coding-theory one by S. T. Dougherty \cite{D}
(incorporating part of Stinson's proof). The present note follows the
general outline of these two papers, but it contains some different ways of
setting up details. They will be itemized in steps.

It is well-documented \cite[Section III.3]{HCD} that the officer problem is
equivalent to one involving \textbf{nets} (among other structures). An $%
(n,k) $ net is a combinatorial design $(\mathcal{P},\mathcal{L})$ consisting
of a set $\mathcal{P}$ of $n^{2}$ \textbf{points} and a collection\emph{\ }$%
\mathcal{L}$ of \textbf{lines} that are $n$-subsets of $\mathcal{P}$. The
lines have these properties:

\begin{enumerate}
\item $\mathcal{L}$ is the disjoint union of $k$ \textbf{parallel classes}.
Each class is a partition of $\mathcal{P}$ into $n$ lines.

\item Two lines from different parallel classes meet at exactly one point.
\end{enumerate}

\noindent A solution to the 36 officer problem is equivalent to the
existence of a $(6,4)$ net, the positions in the square being the points.
The rows and columns provide two parallel classes, the locations of officers
by regiment form the lines of the third class, and the locations by rank
form the fourth. Here is an illustration of a square given by Euler in \cite%
{E} that almost works. Latin letters denote the regiments and Greek the
ranks, Euler's traditional symbols leading to the name \textquotedblleft
Graeco-Latin square.\textquotedblright\ Unfortunately, the pairs $b\zeta $
and $d\varepsilon $ are duplicated (and $b\varepsilon $ and $d\zeta $ left
out).%
\[
\begin{tabular}{|c|c|c|c|c|c|}
\hline
$a\alpha $ & $b\zeta $ & $c\delta $ & $d\varepsilon $ & $e\gamma $ & $f\beta 
$ \\ \hline
$b\beta $ & $c\alpha $ & $f\varepsilon $ & $e\delta $ & $a\zeta $ & $d\gamma 
$ \\ \hline
$c\gamma $ & $d\varepsilon $ & $a\beta $ & $b\zeta $ & $f\delta $ & $e\alpha 
$ \\ \hline
$d\delta $ & $f\gamma $ & $e\zeta $ & $c\beta $ & $b\alpha $ & $a\varepsilon 
$ \\ \hline
$e\varepsilon $ & $a\delta $ & $b\gamma $ & $f\alpha $ & $d\beta $ & $c\zeta 
$ \\ \hline
$f\zeta $ & $e\beta $ & $d\alpha $ & $a\gamma $ & $c\varepsilon $ & $b\delta 
$ \\ \hline
\end{tabular}%
\]

\section*{Step One}

The \textbf{code} of a net $(\mathcal{P},\mathcal{L})$ over a field $\mathbb{%
F}$ is the subspace of the $\mathbb{F}$-space $\mathbb{F}^{\mathcal{P}}$ of $%
\mathbb{F}$-valued functions on $\mathcal{P}$ spanned by the characteristic
functions of the lines \cite{D}. Following Assmus and Key \cite[Definition
1.2.5]{AK}, we denote the characteristic function of a subset $X$ of $%
\mathcal{P}$ by $v^{X}$. If $\Pi $ is a parallel class, $v^{\mathcal{P}%
}=\sum_{\lambda \in \Pi }v^{\lambda }$, and $v^{\lambda }v^{\lambda ^{\prime
}}=0$ for $\lambda $ and $\lambda ^{\prime }$ different members of $\Pi $.
The \textbf{weight} $\mathrm{wt}(f)$ of $f\in \mathbb{F}^{\mathcal{P}}$ is
the number of points $P$ with $f(P)$ nonzero; and the standard dot product
on $\mathbb{F}^{\mathcal{P}}$ is given by $f\cdot f^{\prime }=\sum_{P\in 
\mathcal{P}}f(P)f^{\prime }(P)$. Let $\mathcal{N}$ be a $(6,4)$ net $(%
\mathcal{P},\mathcal{L})$ and let $\mathcal{C}$ be its binary code ($\mathbb{%
F=F}_{2}$). As in \cite{D} and \cite{S}, we eventually show that $\mathcal{N}
$ cannot exist by establishing contradictory information about the dimension
of $\mathcal{C}$.

The \textbf{hull} $\mathcal{H}$ of $\mathcal{N}$ at $\mathbb{F}_{2}$ is $%
\mathcal{C}\cap \mathcal{C^{\perp }}$ (the orthogonal space $\mathcal{%
C^{\perp }}$ taken with respect to the dot product) \cite[Definition 2.4.3]%
{AK}. It contains the differences (sums in this binary case) of parallel
lines. If $\Pi _{1},\ldots ,\Pi _{4}$ are the parallel classes and $\lambda
_{i}\in \Pi _{i}$, then the span $\left\langle \lambda _{1},\ldots ,\lambda
_{4}\right\rangle $ has dimension 4, because the Gram matrix $\left[ \lambda
_{i}\cdot \lambda _{j}\right] $ is 
\[
\left[ 
\begin{array}{cccc}
0 & 1 & 1 & 1 \\ 
1 & 0 & 1 & 1 \\ 
1 & 1 & 0 & 1 \\ 
1 & 1 & 1 & 0%
\end{array}%
\right] 
\]
which is nonsingular. Thus $\mathcal{C}=\left\langle \lambda _{1},\ldots
,\lambda _{4}\right\rangle \perp \mathcal{H}$, since for $\lambda \in \Pi
_{i}$, $\lambda -\lambda _{i}\in \mathcal{H}$.

\begin{lemma}
We have $\dim \mathcal{C}\leq 20$.
\end{lemma}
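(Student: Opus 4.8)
The plan is to turn the orthogonal decomposition $\mathcal{C}=\langle\lambda_1,\ldots,\lambda_4\rangle\perp\mathcal{H}$ recorded just above into a single numerical inequality. Write $V=\langle\lambda_1,\ldots,\lambda_4\rangle$, which has dimension $4$. First I would check that the sum $\mathcal{C}=V\perp\mathcal{H}$ is genuinely \emph{direct}, so that dimensions add. Since the Gram matrix of $\lambda_1,\ldots,\lambda_4$ is nonsingular, the dot product restricted to $V$ is nondegenerate, so $V\cap V^{\perp}=0$; and since $\mathcal{H}=\mathcal{C}\cap\mathcal{C}^{\perp}\subseteq\mathcal{C}^{\perp}\subseteq V^{\perp}$ (the last inclusion because $V\subseteq\mathcal{C}$), we get $V\cap\mathcal{H}\subseteq V\cap V^{\perp}=0$. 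Hence $\dim\mathcal{C}=\dim V+\dim\mathcal{H}=4+\dim\mathcal{H}$.

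The second ingredient is that the hull lies inside the dual code. The point set $\mathcal{P}$ has $36$ elements, so $\mathcal{C}$ is a binary code of length $36$ and $\dim\mathcal{C}+\dim\mathcal{C}^{\perp}=36$; from $\mathcal{H}\subseteq\mathcal{C}^{\perp}$ it follows that $\dim\mathcal{H}\le 36-\dim\mathcal{C}$. Substituting into the previous paragraph gives $\dim\mathcal{C}=4+\dim\mathcal{H}\le 4+(36-\dim\mathcal{C})$, hence $2\dim\mathcal{C}\le 40$ and $\dim\mathcal{C}\le 20$.

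The argument is short, and the only place any care is needed is the first paragraph: verifying that the orthogonal sum is direct so that the dimensions genuinely add. That comes for free from the nonsingularity of the Gram matrix already established, and after that the bound falls out using nothing about the net beyond $|\mathcal{P}|=36$ — indeed the same computation yields $\dim\mathcal{C}\le(n^{2}+k)/2$ for any $(n,k)$ net whose $k\times k$ Gram matrix $J-I$ is invertible over $\mathbb{F}_2$.
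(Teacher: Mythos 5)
Your proof is correct and is essentially the paper's own argument: both combine $\dim\mathcal{C}=4+\dim\mathcal{H}$ from the orthogonal decomposition with $\dim\mathcal{C}+\dim\mathcal{H}\le 36$ (you phrase this as $\mathcal{H}\subseteq\mathcal{C}^{\perp}$, the paper as $\mathcal{C}\subseteq\mathcal{H}^{\perp}$, which is the same inequality). Your explicit check that the sum is direct, via nonsingularity of the Gram matrix, is a detail the paper leaves implicit but uses in the same way.
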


\begin{proof}
Since $\mathcal{C}\subseteq \mathcal{H}^{\perp }$, $\dim \mathcal{C}\leq
36-\dim \mathcal{H}=36-(\dim \mathcal{C}-4)$, making $\dim \mathcal{C}\leq 20$.
\end{proof}

Because $v^{\mathcal{P}}=\sum_{\lambda \in \Pi_i}v^{\lambda }$, there are
parallel class dependencies%
\begin{equation}
\sum_{\lambda \in \Pi_i}v^{\lambda }-\sum_{\lambda \in \Pi_j}v^{\lambda
}=0,\quad i\neq j,  \label{EqClassDep}
\end{equation}%
spanned by the three with $j=4$. The next step in the proof of Euler's
conjecture is to show that there are no further independent dependencies.
That will imply that $\dim \mathcal{C}\geq 24-3=21$ and contradict the Lemma.

\section*{Step Two}

To that end, consider sets of lines, and ascribe to such a set $\Lambda $
its \textbf{parallax} $\pi (\Lambda )=l_{1}l_{2}l_{3}l_{4}$, with $%
l_{i}=\left\vert \Lambda \cap \Pi _{i}\right\vert $. Call the points on the
lines of $\Lambda $ the \textbf{points} of $\Lambda $, and let $p_{j}$ be
the number of $j$-points, those that appear on $j$ lines of $\Lambda $. Put $%
l=l_{1}+\ldots +l_{4}$ and $m=\sum_{i<j}l_{i}l_{j}$. Then  double-counting
incident point-line pairs gives%
\begin{eqnarray*}
6l &=&p_{1}+2p_{2}+3p_{3}+4p_{4} \\
m &=&p_{2}+3p_{3}+6p_{4}.
\end{eqnarray*}%
Consequently%
\begin{eqnarray*}
p_{1} &=&6l-2m+3p_{3}+8p_{4} \\
p_{2} &=&m-3p_{3}-6p_{4}.
\end{eqnarray*}%
Let $c(\Lambda )$ be the binary sum $\sum_{\lambda \in \Lambda }v^{\lambda }$%
, the member of $\mathcal{C}$ spanned by the lines of $\Lambda $. It is a
consequence of the class dependencies (\ref{EqClassDep}) that if we change $%
\Lambda $ to $\Lambda ^{\prime }$ by switching the lines of $\Lambda $ in $%
\Pi _{i}$ with those in $\Pi _{i}$ not in $\Lambda $, for an even number of
parallel classes $\Pi _{i}$, then $c(\Lambda ^{\prime })=c(\Lambda )$. The
corresponding line counts $l_{i}$ and $l_{i}^{\prime }$ of $\Lambda $ and $%
\Lambda ^{\prime }$ satisfy $l_{i}+l_{i}^{\prime }=6$. A line dependency
corresponds to a line set $\Lambda $ with $c(\Lambda )=0$. For such a set,
what are the possibilities for $\pi (\Lambda )$? For instance, the parallel
class dependencies have $\pi (\Lambda )=6600,6060,\ldots ,6666$, with an
even number of 0s and 6s.

In searching for the parallaxes corresponding to line sets $\Lambda $ with $%
c(\Lambda )=0$, we may assume that in $\pi (\Lambda )$, $3\geq l_{1}\geq
l_{2}\geq l_{3}$, by switchings involving $\Pi _{4}$ and then renumbering.
Moreover, if $l_{1}=3$, we can also take $3=l_{1}\geq l_{2}\geq l_{3}\geq
l_{4}$. The 1's of any $c(\Lambda )$ appear at the 1-points and the 3-points
of $\Lambda $, and%
\begin{equation}
\mathrm{wt}(c(\Lambda ))=p_{1}+p_{3}=6l-2m+4p_{3}+8p_{4}.  \label{EqWt}
\end{equation}%
If $c(\Lambda )$ is to be 0, we need $p_{1}=p_{3}=0$. That gives%
\begin{eqnarray*}
2p_{2} &=&9l-m \\
4p_{4} &=&m-3l.
\end{eqnarray*}%
Demanding that $p_{2}$ and $p_{4}$ be nonnegative integers for parallaxes
satisfying the inequalities listed gives four possibilities, as a short
Maple computation shows:%
\[
2222,\quad 2226,\quad 3330,\quad 3332.
\]

All but $2222$ can be ruled out. Switching will change $2226$ to $2240$. But
if $\pi (M)=2240$, then $\mathrm{wt}(c(M))=4p_{3}+8p_{4}+8$, by (\ref{EqWt}%
), which cannot be 0. If $\pi (\Lambda )=3330$ or $3332$ and $c(\Lambda )=0$%
, then adding or removing a line in $\Pi _{4}$ gives a line set $M$ with $%
\pi (M)=3331$ and $\mathrm{wt}(c(M))=6$. But $\mathrm{wt}%
(c(M))=4p_{3}+8p_{4}-12$, a multiple of 4; so $3330$ and $3332$ are out, too.

When $c(\Lambda )=0$ and $\pi (\Lambda )=2222$, $p_{1}=p_{3}=0$, $p_{2}=24$,
and $p_{4}=0$. This line set comes up in \cite{D}, where it is ruled out. We
shall present another argument to exclude it in the next step. Before that,
begin by labeling the two lines in $\Lambda \cap \Pi _{i}$ with 1 and $-1$.
Then the following facts are all consequences of the intersection properties
of the $(6,4)$ net. Each of the 24 points of $\Lambda $ can be tagged by the
two lines of $\Lambda $ through it, in a quadruple $x_{1}x_{2}x_{3}x_{4}$.
Two of the $x_{i}$ are $0$ and the other two $\pm 1$. For example, $10\text{-%
}10$ means the point on line $1$ of $\Lambda \cap \Pi _{1}$ and line $-1$ of 
$\Lambda \cap \Pi _{3}$. (If these quadruples are interpreted as points in $%
\mathbb{R}^4$, they are the vertices of a regular $24$-cell \cite[Section 3.7%
]{C}.)  Each line not in $\Lambda $ goes through three of the 24 points (the
six intersections with the lines of $\Lambda $ not parallel to it, doubled
up), and it can be labeled by them. If, say, the line is in $\Pi _{4}$, its
labeling points will be $xy00,\text{-}x0z0,0\text{-}y\text{-}z0$, where $%
x,y,z\in \left\{ 1,-1\right\} $. This and the other three lines of $\Pi _{4}$
not in $\Lambda $ will then be%
\begin{eqnarray*}
&&xy00\quad \text{-}x0z0\quad 0\text{-}y\text{-}z0 \\
&&x\text{-}y00\quad \text{-}x0\text{-}z0\quad 0yz0 \\
&&\text{-}xy00\quad x0\text{-}z0\quad 0\text{-}yz0 \\
&&\text{-}x\text{-}y00\quad x0z0\quad 0y\text{-}z0.
\end{eqnarray*}%
It follows that there are just two possible line lists, one containing $1100$%
, $\text{-}1010$, \newline
$0\text{-}1 \text{-}10$, and the other, $1100$, $\text{-}10\text{-}10$, $0%
\text{-}110$. Moreover, the second is obtained from the first by negating
all entries. On the other hand, upon exchanging the signs for $\Lambda \cap
\Pi _{4}$, the displayed list will not change, but the new line lists for
each $\Pi _{i}-(\Lambda \cap \Pi _{i})$, $i<4$, will be obtained by negating
all entries. What this implies is that all the possible collections of the
four parallel class line lists for the lines not in $\Lambda $ are
equivalent under sign changes, that is, label changes of the members of the $%
\Lambda \cap \Pi _{i}$.

\section*{Step Three}

Set up the standard layout for a six by six Graeco-Latin square. As before,
the 36 small squares represent the points of the net, and rows and columns
correspond to the first two parallel classes. The lines of the third are $%
a,b,c,d,e,f$, and those of the fourth, $\alpha ,\beta ,\gamma ,\delta
,\varepsilon ,\zeta $. The lines in $\Lambda $ are the right two columns, $%
x_{1}=\pm 1$; the bottom two rows, $x_{2}=\pm 1$; $e$ and $f$, $x_{3}=\pm 1$%
; and $\varepsilon $ and $\zeta $, $x_{4}=\pm 1$. Because of the labeling
flexibility for the other lines, outlined above, we can include the 24
points of $\Lambda $ in the diagram:%
\[
\begin{tabular}{|c|c|c|c|c|c|}
\hline
$0011$ &  &  &  & $100\text{-}1$ & $\text{-}10\text{-}10$ \\ \hline
& $00\text{-}11$ &  &  & $1010$ & $\text{-}100\text{-}1$ \\ \hline
&  & $001\text{-}1$ &  & $10\text{-}10$ & $\text{-}1001$ \\ \hline
&  &  & $00\text{-}1\text{-}1$ & $1001$ & $\text{-}1010$ \\ \hline
$01\text{-}10$ & $010\text{-}1$ & $0101$ & $0110$ & $1100$ & $\text{-}1100$
\\ \hline
$0\text{-}10\text{-}1$ & $0\text{-}110$ & $0\text{-}1\text{-}10$ & $0\text{-}%
101$ & $1\text{-}100$ & $\text{-}1\text{-}100$ \\ \hline
\end{tabular}%
\]
Continuing with the flexibility, we assign the points of $\Lambda$ on the
remaining lines:%
\[
\begin{tabular}{ll}
$a:1100,0\text{-}10\text{-}1,\text{-}1001$ & $\alpha :1100,0\text{-}110,%
\text{-}10\text{-}10$ \\ 
$b:\text{-}1100,0\text{-}101,100\text{-}1$ & $\beta :\text{-}1100,0\text{-}1%
\text{-}10,1010$ \\ 
$c:1\text{-}100,0101,\text{-}100\text{-}1$ & $\gamma :1\text{-}100,01\text{-}%
10,\text{-}1010$ \\ 
$d:\text{-}1\text{-}100,010\text{-}1,1001$ & $\delta :\text{-}1\text{-}%
100,0110,10\text{-}10$%
\end{tabular}%
\]%
The result, on filling in the line names, is two-thirds of a Graeco-Latin
square:%
\[
\begin{tabular}{|c|c|c|c|c|c|}
\hline
$e\varepsilon $ &  &  &  & $b\zeta $ & $f\alpha $ \\ \hline
& $f\varepsilon $ &  &  & $e\beta $ & $c\zeta $ \\ \hline
&  & $e\zeta $ &  & $f\delta $ & $a\varepsilon $ \\ \hline
&  &  & $f\zeta $ & $d\varepsilon $ & $e\gamma $ \\ \hline
$f\gamma $ & $d\zeta $ & $c\varepsilon $ & $e\delta $ & $a\alpha $ & $b\beta 
$ \\ \hline
$a\zeta $ & $e\alpha $ & $f\beta $ & $b\varepsilon $ & $c\gamma $ & $d\delta 
$ \\ \hline
\end{tabular}%
\]

Now the challenge is to fill in the twelve blank squares with pairs from $%
\left\{ a,b,c,d\right\} \times \left\{ \alpha ,\beta ,\gamma ,\delta
\right\} $ with the desired non-repetition properties. So $a\alpha ,b\beta
,c\gamma ,d\delta $ are excluded, being present in the lower right, and each
row and column has further exclusions coming from the bottom two rows and
the right two columns. We abbreviate the layout this way, showing the row
and column exclusions at the right and bottom sides:%
\[
\begin{tabular}{ccccc}
\cline{1-4}
\multicolumn{1}{|c}{$\times $} & \multicolumn{1}{|c}{} & \multicolumn{1}{|c}{
} & \multicolumn{1}{|c}{} & \multicolumn{1}{|c}{$b,\alpha $} \\ 
\cline{1-4}\cline{1-2}\cline{3-4}
\multicolumn{1}{|c}{} & \multicolumn{1}{|c}{$\times $} & \multicolumn{1}{|c}{
} & \multicolumn{1}{|c}{} & \multicolumn{1}{|c}{$c,\beta $} \\ \cline{1-4}
\multicolumn{1}{|c}{} & \multicolumn{1}{|c}{} & \multicolumn{1}{|c}{$\times $%
} & \multicolumn{1}{|c}{} & \multicolumn{1}{|c}{$a,\delta $} \\ \cline{1-4}
\multicolumn{1}{|c}{} & \multicolumn{1}{|c}{} & \multicolumn{1}{|c}{} & 
\multicolumn{1}{|c}{$\times $} & \multicolumn{1}{|c}{$d,\gamma $} \\ 
\cline{1-4}
$a,\gamma $ & $d,\alpha $ & $c,\beta $ & $b,\delta $ & 
\end{tabular}%
\]%
For instance, none of the three pairs in the top row can involve $b$ or $%
\alpha $; and none in the left column $a$ or $\gamma $. (The four squares
with $\times $'s are to be left blank.)

The second row and the third column both have the same exclusions, $c$ and $%
\beta $. So the pairs available for the \textquotedblleft
cross\textquotedblright\ of the five squares of that row and that column are 
$a\gamma ,a\delta ,b\alpha ,b\gamma ,b\delta ,d\alpha ,$ and $d\gamma $. The
pair $b\gamma $ can go only in the center square of the cross, because of
the exclusions governing its other squares. But that means the four pairs $%
a\gamma ,b\alpha ,b\delta ,$ and $d\gamma $ cannot appear in the cross, and
that leaves only two pairs for the other four positions. So $b\gamma $
cannot appear in the cross. Now whatever pair is in the center rules out two
other pairs, leaving only three pairs for the four remaining squares of the
cross. Thus the challenge cannot be met.

In conclusion, the assumption of a further dependency on the lines of a $%
(6,4)$ net beyond the parallel class dependencies, which necessarily
involves a line set with parallax $2222$, has been shown to be untenable.
Thus there is no $(6,4)$ net and Euler's 36 officer conjecture is indeed
correct!

\section*{No affine plane of order $6$}

Since four parallel classes of an affine plane of order $6$ would constitute
a $(6,4)$ net, there can be no such plane. A. Bichara \cite{B} gave a direct
proof from the incidence properties of such a plane,, relating them to arcs.
Here we give a different direct proof.

Let $\mathbf{A}$ be a hypothetical affine plane of order $6$. We first show
that the diagonals of any parallelogram in $\mathbf{A}$ are parallel.
Suppose not, and set up a coordinate system for $\mathbf{A}$ using $1,\ldots
,6$ for the coordinates and arranging things so that the sides of the
offending parallelogram are the lines $x=1,x=3,y=1,y=3$, the line $R:y=x$ is
one of the diagonals, and it and the other diagonal $D$ meet in $11$ (we
shall abbreviate $(x,y)$ to $xy$). By permuting $4,5,6$, we can arrange $R$
and $D$ to contain these points:%
\begin{eqnarray*}
R &:&11,22,33,44,55,66 \\
D &:&13,22,31,46,54,65.
\end{eqnarray*}%
Here's a schematic diagram of the coordinate grid, with the points of $R$
and $D$ indicated:%
\[
\begin{tabular}{lllllll}
\cline{2-7}\cline{4-4}
$y=6$ & \multicolumn{1}{|l}{} & \multicolumn{1}{|l}{} & \multicolumn{1}{|l}{}
& \multicolumn{1}{|l}{$D$} & \multicolumn{1}{|l}{} & \multicolumn{1}{|l|}{$R$%
} \\ \cline{2-7}\cline{4-4}
$y=5$ & \multicolumn{1}{|l}{} & \multicolumn{1}{|l}{} & \multicolumn{1}{|l}{}
& \multicolumn{1}{|l}{} & \multicolumn{1}{|l}{$R$} & \multicolumn{1}{|l|}{$D$%
} \\ \cline{2-7}\cline{4-4}
$y=4$ & \multicolumn{1}{|l}{} & \multicolumn{1}{|l}{} & \multicolumn{1}{|l}{}
& \multicolumn{1}{|l}{$R$} & \multicolumn{1}{|l}{$D$} & \multicolumn{1}{|l|}{
} \\ \cline{2-7}\cline{4-4}
$y=3$ & \multicolumn{1}{|l}{$D$} & \multicolumn{1}{|l}{} & 
\multicolumn{1}{|l}{$R$} & \multicolumn{1}{|l}{} & \multicolumn{1}{|l}{} & 
\multicolumn{1}{|l|}{} \\ \cline{2-7}\cline{4-4}
$y=2$ & \multicolumn{1}{|l}{} & \multicolumn{1}{|l}{$R,D$} & 
\multicolumn{1}{|l}{} & \multicolumn{1}{|l}{} & \multicolumn{1}{|l}{} & 
\multicolumn{1}{|l|}{} \\ \cline{2-7}\cline{4-4}
$y=1$ & \multicolumn{1}{|l}{$R$} & \multicolumn{1}{|l}{} & 
\multicolumn{1}{|l}{$D$} & \multicolumn{1}{|l}{} & \multicolumn{1}{|l}{} & 
\multicolumn{1}{|l|}{} \\ \cline{2-7}\cline{4-4}
& $x=1$ & $x=2$ & $x=3$ & $x=4$ & $x=5$ & $x=6$%
\end{tabular}%
\]%
Divide $\mathbf{A}$ into four quadrants:%
\[
\begin{tabular}{ll}
lower left, LL & $\left\{ xy|1\leq x\leq 3,1\leq y\leq 3\right\} $ \\ 
lower right, LR & $\left\{ xy|4\leq x\leq 6,1\leq y\leq 3\right\} $ \\ 
upper left, UL & $\left\{ xy|1\leq x\leq 3,4\leq y\leq 6\right\} $ \\ 
upper right, UR & $\left\{ xy|4\leq x\leq 6,4\leq y\leq 6\right\} $%
\end{tabular}%
\]%
So $R$ and $D$ lie entirely in LL and UR

Thinking about how a line $L$ not parallel to a grid line $x=c$ or $y=d$
meets these grid lines, one sees that $L$ must have as many points in LL as
in UR. Moreover, $L$ cannot lie entirely in UL and LR , because it must meet
at least one of $R$ and $D$, since they cannot both be parallel to $L$. So
the five lines parallel to $R$ other than $R$ each have a point in LL. As
there are six points in LL not on $R$, four of these parallels have one
point in LL and one has two. The same thing holds for the parallels of $D$.

Now suppose that the points $45,56,64$ in UR not on $R$ or $D$ are
collinear, on $L$, say. Then the part of $L$ in LL would be one of these
four possibilities:%
\[
\begin{tabular}{llll}
$11$ & $23$ & $32$ & $R$ \\ 
$12$ & $21$ & $33$ & $R$ \\ 
$12$ & $23$ & $31$ & $D$ \\ 
$13$ & $21$ & $32$ & $D$%
\end{tabular}%
\]%
The fourth entry tells which of $R$ or $D$ the line $L$ meets. But each
choice presents a parallel to the other of $R$ or $D$, and that gives $L$
too many points in LL.

Thus one of the three sides of the triangle with vertices $45,56,64$ is not
parallel to either of $R$ or $D$. Such a side $S$ cannot meet either $R$ or $%
D$ in UR without being a grid line. So $S$ meets \emph{both} $R$ and $D$ in
LL. The only way to do that without being a grid line is to go through $22$;
but now there is no second point available for $S$ in LL that is not already
on a line through $22$.

Therefore the diagonals of every parallelogram in $\mathbf{A}$ are parallel.
Keep the grid layout above and the line $R$. Work with grid parallelograms
with vertices $xy,xy^{\prime },x^{\prime }y,x^{\prime }y^{\prime }$ to
determine points on lines, as follows: if we know one diagonal and know that
a parallel to it goes through a third vertex, then that parallel must go
through the fourth vertex. Line $R$ is a diagonal of the parallelogram $%
11,12,21,22$, and we now take $D$ to be the other diagonal, parallel to $R$.
If $D$ goes through $3z$, then by parallelogram $33,3z,z3,zz$ we find that $%
z3\in D$. Renumber to make $z=4$. Then $R$ and $D$ have these points, $56$
and $65$ now being forced to be on $D$:%
\begin{eqnarray*}
R &:&11,22,33,44,55,66 \\
D &:&12,21,34,43,56,65
\end{eqnarray*}%
Take $A$ and $B$ to be parallels to $R$ and $D$ with $13\in A$ and $14\in B$
initially. The further points of $A$ and $B$ shown in the diagram here are
obtained from the parallelogram rule.%
\[
\begin{tabular}{lllllll}
\cline{2-7}\cline{4-4}
$y=6$ & \multicolumn{1}{|l}{} & \multicolumn{1}{|l}{} & \multicolumn{1}{|l}{}
& \multicolumn{1}{|l}{} & \multicolumn{1}{|l}{$D$} & \multicolumn{1}{|l|}{$R$%
} \\ \cline{2-7}\cline{4-4}
$y=5$ & \multicolumn{1}{|l}{} & \multicolumn{1}{|l}{} & \multicolumn{1}{|l}{}
& \multicolumn{1}{|l}{} & \multicolumn{1}{|l}{$R$} & \multicolumn{1}{|l|}{$D$%
} \\ \cline{2-7}\cline{4-4}
$y=4$ & \multicolumn{1}{|l}{$B$} & \multicolumn{1}{|l}{$A$} & 
\multicolumn{1}{|l}{$D$} & \multicolumn{1}{|l}{$R$} & \multicolumn{1}{|l}{}
& \multicolumn{1}{|l|}{} \\ \cline{2-7}\cline{4-4}
$y=3$ & \multicolumn{1}{|l}{$A$} & \multicolumn{1}{|l}{$B$} & 
\multicolumn{1}{|l}{$R$} & \multicolumn{1}{|l}{$D$} & \multicolumn{1}{|l}{}
& \multicolumn{1}{|l|}{} \\ \cline{2-7}\cline{4-4}
$y=2$ & \multicolumn{1}{|l}{$D$} & \multicolumn{1}{|l}{$R$} & 
\multicolumn{1}{|l}{$B$} & \multicolumn{1}{|l}{$A$} & \multicolumn{1}{|l}{}
& \multicolumn{1}{|l|}{} \\ \cline{2-7}\cline{4-4}
$y=1$ & \multicolumn{1}{|l}{$R$} & \multicolumn{1}{|l}{$D$} & 
\multicolumn{1}{|l}{$A$} & \multicolumn{1}{|l}{$B$} & \multicolumn{1}{|l}{}
& \multicolumn{1}{|l|}{} \\ \cline{2-7}\cline{4-4}
& $x=1$ & $x=2$ & $x=3$ & $x=4$ & $x=5$ & $x=6$%
\end{tabular}%
\]%
For instance, parallelogram $11,13,31,33$ with diagonal $R$ and $13\in A$
implies that $31\in A$. Then $21,24,31,34$ with diagonal $D$ and $31\in A$
makes $24\in A$. Continuing this way, we fill in all the positions $xy$ with 
$1\leq x,y\leq 4$. But now we're stuck -- there's no place for two more $A$%
's and $B$'s. We conclude that $\mathbf{A}$ does not exist!

\section*{Comments}

The section of the \emph{Handbook of Combinatorial Designs} cited \cite[%
Section III.3]{HCD} presents general results on mutually orthogonal Latin
squares. There is a recent disproof of Euler's conjecture using certain
combinatorial matrices in the paper by K. Wang and K. Chen \cite{WC}.

Once it is known that the diagonals of parallelograms in affine planes of
order $6$ are parallel, it follows that the diagonal points of any
quadrangle in a projective plane of order 6 are collinear. Such a plane is a 
\textbf{Fano plane} in the terminology of A. Gleason \cite{G}. But then it
is Desarguesian, by Theorem 3.5 of \cite{G}. So its order would have to be a
power of 2.

The famous Bruck-Ryser theorem \cite{BR} rules out infinitely many orders
for affine planes, $6$ being one of them.

\end{document}